\newtheorem{theorem}{Theorem}
\newtheorem{condition}[theorem]{Condition}
\newtheorem{corollary}[theorem]{Corollary}
\newtheorem{example}[theorem]{Example}
\newtheorem{lemma}{Lemma}
\newtheorem{remark}[theorem]{Remark}
\newenvironment{proof}[1][Proof]{\textbf{#1.} }{\ \rule{0.5em}{0.5em}}
\renewcommand{\baselinestretch}{1}
\begin{document}

\title{Uniqueness of Solutions for Certain Markovian Backward Stochastic
Differential Equations}
\author{Coskun Cetin\thanks{%
CSU, Department of Mathematics and Statistics, 6000 J St., Sacramento, CA
95819. Ph: (916) 278-6221. Fax: (916) 278-5586. Email: cetin@csus.edu}}
\date{October 28, 2012}
\maketitle

\begin{abstract}
This paper considers the problem of uniqueness of the solutions to a class
of Markovian backward stochastic differential equations (BSDEs) which are
also connected to certain nonlinear partial differential equation (PDE)
through a probabilistic representation. Assuming that there is a solution to
the BSDE or to the corresponding PDE, we use the probabilistic
interpretation to show the uniqueness of the solutions, and provide an
exampleof a stochastic control application.
\end{abstract}

\begin{quotation}
\textbf{Key Words}: Markovian BSDEs, quasilinear PDEs, uniqueness of
solutions \medskip

\textbf{AMS Subject Classification}: 60H10, 49J20, 43E20, 65C05 \smallskip 
\end{quotation}

\section{1. Introduction}

In this paper, we study a class of decoupled forward-backward stochastic
differential equations (FBSDEs) which have a Markovian structure of the
following form:%
\begin{eqnarray}
dX(t) &=&\mu (t,X(t))dt+\sigma (t,X(t))dW(t),\text{ }0\leq t\leq T  \notag \\
dY(t) &=&-F(t,X(t),Y(t),Z(t))dt+Z(t)dW(t),\text{ }0\leq t\leq T  \label{p0}
\\
X(0) &=&x_{0};\text{ \ }Y(T)=g(X(T))  \notag
\end{eqnarray}%
where the forward process $X$ has a unique solution in a probability space $%
(\Omega ,\digamma ,P),$ the random variable $Y(T)=g(X(T))$\ is integrable
and the \textit{driver} of the backward process $Y,$ $F(t,x,y,z),$\ is
quadratic in $z$. Due to such a growth condition on $z$, these BSDEs are
called \textit{quadratic} BSDEs or "BSDEs with quadratic growth" in the
literature.\ Moreover, due to the Markovian nature of formulation, the
FBSDEs of the form (\ref{p0}) are known to be related to certain quasilinear
parabolic partial differential equations (PDEs).

After the first existence-uniqueness result for nonlinear BSDEs with
Lipshitz coefficients was given by Pardoux and Peng (1990), FBSDEs and
especially Markovian BSDEs have appeared in many application areas including
mathematical finance, stochastic optimal control and analysis of nonlinear
PDEs. The existence-uniqueness results for more general BSDEs were provided
by Mao (1995), Lepeltier and San Martin (1997, 1998), Kobylanski (2000),
Briand et. al (2007), Briand and Hu (2006, 2008) and Fan and Jiang (2010),
among others.\ Their connections with quasilinear PDE's were first stated by
Pardoux and Peng (1992), and Peng (1992) by generalising the Feynman-Kac
representation of PDE's. They also provided a uniqueness result when the
coefficients involved were uniformly Lipshitz. Similar results and their
connections with the stochastic control problems were also reported in El
Karoui et al (1997), Ma and Yong (1999), Cetin (2005), Fuhrman et. al (2006)
and Richou (2011).

The existence results for the quadratic BSDEs usually assume strong growth,
monotonicity, convexity/concavity or boundedness conditions on the driver or
on the terminal value. The issue of uniqueness is much more complicated and
usually requires stronger assumptions or some specific forms of the
parameters. See Briand et. al (2007), Fan and Jiang (2010) and Richou (2011)
for a discussion of such special cases, and the other works in the
literature. Our aim is to obtain the uniqueness results for a class of the
Markovian BSDEs with quadratic growth where the solution $Y$ is bounded from
below only. Such equations usually appear in the stochastic control
problems, where the process $Y$ would yield the value function of a
minimization problem over a suitable space of admissible controls. An
application to perturbed linear-quadratic regulator (LQR) problem is
provided in the last section.

The rest of the paper is organized as follows: The basic definitions and the
notations of the paper are introduced in the subsection 1.1 below. A
uniqueness result for solutions to a class of Markovian BSDEs is given in
the section 2. The section 3 describes how such BSDEs can be used to study
the properties of the solutions to some certain quasilinear PDEs which are
also related to the stochastic optimal control problems where only the drift
term of the state process is control-dependent. 

\subsection{1.1\quad Definitions and Notations}

For simplicity, we consider the one-dimensional Euclidean space $%
\mathbb{R}
$\ even though most of the results hold for higher dimensions. For a given $%
T>0$ and a probability space $(\Omega ,\digamma ,P)$ where $\digamma
=\{\digamma _{t}:0\leq t\leq T\}$ is the complete $\sigma -$algebra
generated by a standard Brownian motion process $W$, we define the following
spaces:

\begin{itemize}
\item $C^{p,q}([0,T])$: The space of all real-valued measurable functions $f$
$:[0,T]\times 
\mathbb{R}
$ such that $f(t,x)$ is $p$ (respectively, $q$) times continuously
differentiable with respect to $t$ (respectively, $x$) where $p,q$ are
non-negative integers.

\item $L_{\digamma _{T}}^{p}(\Omega )$: The space of $\digamma _{T}$%
-measurable random variables $H$\ such that $E[\left\vert H\right\vert
^{p}]<\infty $.

\item $L_{\digamma _{T}}^{\infty }(\Omega )$: The space of $\digamma _{T}$%
-measurable essentially bounded\ random variables.

\item $L_{\digamma }^{p}([0,T])$: The space of $\digamma $-adapted processes 
$f$ such that $E[\int\limits_{0}^{T}\left\vert f(t)\right\vert
^{p}dt]<\infty $.

\item $L_{\digamma }^{\infty }([0,T])$: The space of $\digamma $-adapted
essentially bounded processes.

\item $S_{\digamma }^{p}([0,T])$: The space of $\digamma $-adapted processes
such that $E[\sup\limits_{0\leq t\leq T}\left\vert f(t)\right\vert
^{p}]<\infty .$
\end{itemize}

The notation $E_{t}[.]$ will denote the conditional expectation $%
E[.|\digamma _{t}]$. When the initial value of a process $X$ is given at
time $t$, then $E^{t,x}[.]\ $refers to $E[.]\ $with $X_{t}=x$. For a
deterministic function $h(t,x):[0,T]\times 
\mathbb{R}
\rightarrow 
\mathbb{R}
$, the subscript notation denotes partial derivatives: $h_{t}(t,x)=\frac{%
\partial h}{\partial t}(t,x)$, $h_{x}(t,x)=\frac{\partial h}{\partial x}(t,x)
$ and $h_{xx}(t,x)=\frac{\partial ^{2}h}{\partial x^{2}}(t,x)$. In
particular, for functions or ODE's of one variable $t$, dot\ ($^{\cdot }$)
designates the derivative with respect to $t$. For a function $v\in
C^{1,2}([0,T]\times 
\mathbb{R}
)$, let $L$\ denote the backward evolution operator associated with the
forward diffusion process $X$ in (\ref{p0}): 
\begin{equation}
\mathsf{L}v(s,x)=v_{s}(s,x)+\mu (s,x)v_{x}(s,x)+\frac{1}{2}\sigma
^{2}v_{xx}(s,x).  \label{p1}
\end{equation}%
Then consider the PDE%
\begin{eqnarray}
\mathsf{L}v(t,x)+F(t,x,v,\sigma v_{{\large x}}) &=&0  \label{p2} \\
v(T,x) &=&g(x).  \notag
\end{eqnarray}%
If $\exists $ $c>0$ such that $\sigma (t,x)\geq c$ for all $(t,x)\in \lbrack
0,T]\times 
\mathbb{R}
,$ then the PDE (\ref{p2}) is called \textit{uniformly parabolic}. Such PDEs
are known to have unique classical or generalized (e.g. viscocity) solutions
under certain regularity and growth conditions. When a PDE is associated
with a stochastic control problem in the form of Hamilton-Jacobi-Bellman
(HJB in short) PDE, a "guess" solution to the HJB PDE usually turns out to
be the solution to the corresponding control problem, thanks to the
availability of a relevant verification theorem. For a summary of known
results and the assumptions on such verification theorems, see Fleming and
Soner (2006, IV.4) or Yong and Zhou (1999). A verification theorem is often
stated heuristically in applications to conclude that the solution to the
control problem is also the unique solution to the corresponding PDE, in a
suitable space of continuous functions. In this paper, our emphasis is on a
probabilistic description and interpretation of such equations.\emph{\ }

\section{2. A Uniqueness Result for a Class of Markovian BSDEs}

In this section, we first assume that the BSDE in (\ref{p0})\ has a solution 
$(Y,Z)$ in $S_{F_{T}}^{1}\times L_{F}^{2}$ in a probability space $(\Omega
,\digamma ,P)$. Even though an interpretation of the weak solutions of the
state variable $X$ is relevant in the PDE formulation, we are going to stick
to the strong existence-uniqueness in the reference space $(\Omega ,\digamma
,P)$, for the simpliciy of the presentation. The following result which is a
special case of the Bihari's inequality will be useful in the specification
of the assumptions and the proof of our main result. For a more general
version, one can refer to Bihari (1956) or Mao (1995).

\begin{lemma}[Bihari's inequality]
\label{Bihari}For $T>0$, let $f(t)$ and $v(t)$ be two continuous functions
on $[0,T]$. Moreover, let $\kappa :[0,\infty )\rightarrow \lbrack 0,\infty )$
be a continuous and nondecreasing function such that $\kappa (x)>0$ for $x>0$
and $\dint\limits_{0^{+}}\frac{dx}{\kappa (x)}=\infty $. If $f(t)\leq
\dint\limits_{0}^{t}v(s)\kappa (f(s))ds$ for all $t\in \lbrack 0,T]$, then $%
f(t)=0$ for all $t\in \lbrack 0,T]$. \newline
\end{lemma}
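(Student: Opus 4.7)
My plan is to reduce the integral inequality to a separable ordinary differential inequality by regularizing with a small additive constant, and then exploit the divergence condition $\int_{0^{+}}du/\kappa(u)=\infty$ when the constant is driven to zero. Throughout I assume the implicit sign conventions $f,v\ge 0$ on $[0,T]$ needed to make $\kappa(f(s))$ well-defined and the bound non-vacuous; these are standard in the statement of Bihari's inequality.

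For each $\epsilon>0$, I would introduce the auxiliary function
\[
F_{\epsilon}(t):=\epsilon+\int_{0}^{t}v(s)\kappa(f(s))\,ds,
\]
which is of class $C^{1}$, satisfies $F_{\epsilon}(t)\geq\epsilon>0$, and dominates $f$ by hypothesis. Monotonicity of $\kappa$ together with $f\leq F_{\epsilon}$ gives
\[
F_{\epsilon}'(t)=v(t)\kappa(f(t))\leq v(t)\kappa(F_{\epsilon}(t)).
\]
Dividing by the strictly positive quantity $\kappa(F_{\epsilon}(t))$ and integrating over $[0,t]$ yields
\[
G(F_{\epsilon}(t))\leq G(\epsilon)+\int_{0}^{t}v(s)\,ds,
\]
where $G(x):=\int_{1}^{x}du/\kappa(u)$ is a strictly increasing antiderivative on $(0,\infty)$.

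The final step is to let $\epsilon\downarrow 0$. The hypothesis $\int_{0^{+}}du/\kappa(u)=\infty$ is equivalent to $\lim_{\epsilon\to 0^{+}}G(\epsilon)=-\infty$, while $\int_{0}^{t}v(s)\,ds$ is uniformly bounded on $[0,T]$, so the right-hand side of the previous display tends to $-\infty$ uniformly in $t$. Since $G$ is monotone increasing and $G(x)\to -\infty$ only as $x\to 0^{+}$, I conclude $F_{\epsilon}(t)\to 0$ pointwise. But directly $F_{\epsilon}(t)\to \int_{0}^{t}v(s)\kappa(f(s))\,ds\geq f(t)\geq 0$, forcing $f(t)\equiv 0$ on $[0,T]$.

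The only real subtlety—more a matter of bookkeeping than a genuine obstacle—is the endpoint behavior of $G$: the entire argument hinges on the divergence hypothesis forcing $G(\epsilon)\to -\infty$ fast enough to collapse the bound on $F_{\epsilon}(t)$ in the limit. Without that integrability condition, the best one can extract is a Gronwall-type a-priori estimate rather than the sharp conclusion $f\equiv 0$; with it, every other step reduces to separation of variables, monotonicity of $\kappa$, and the fundamental theorem of calculus.
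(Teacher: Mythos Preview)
Your argument is correct and is essentially the classical proof of Bihari's inequality: regularize the majorant by $\epsilon>0$, separate variables via the antiderivative $G(x)=\int_{1}^{x}du/\kappa(u)$, and use the divergence hypothesis $\int_{0^{+}}du/\kappa(u)=\infty$ to force the bound to collapse as $\epsilon\downarrow 0$. You are also right to flag the implicit sign conventions $f,v\ge 0$: nonnegativity of $f$ is needed just so that $\kappa(f(s))$ is defined, and nonnegativity of $v$ is needed so that the inequality $\kappa(f)\le \kappa(F_{\epsilon})$ survives multiplication by $v$ in the step $F_{\epsilon}'\le v\,\kappa(F_{\epsilon})$.

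As for comparison with the paper: there is nothing to compare. The paper does not give its own proof of this lemma; it states the result as a known special case of Bihari's inequality and refers the reader to Bihari (1956) and Mao (1995) for a more general version. Your write-up is in line with those standard references.
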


Now, for $(t,x)\in \lbrack 0,T)\times 
\mathbb{R}
$, we consider the following form of the driver in (\ref{p0}): 
\begin{equation}
F(t,x,y,z)=f(t,x)+h(t,x)z-\lambda (t,y)-\frac{1}{2}H(t)z^{2},  \label{driver}
\end{equation}%
where the real-valued continuous functions $f,h$ and $\lambda $ on $%
[0,T]\times 
\mathbb{R}
$, and $H:[0,T]\rightarrow 
\mathbb{R}
$ are continuous. The motivation for the choice of such a driver comes from
the stochastic optimal control applications where the control process
appears only in the drift term. Here, the driver function $F$ may neither be
Lipshitz with respect to any of the variables, nor have a linear growth in
any of them. Moreover, we neither impose any convexity/concavity assumption
on $f$ or $\lambda ,$ nor an exponential moment condition on the terminal
condition $g$ or $f(t,x)$\footnote{%
Exponential moment conditions are too strong for many interesting FBSDEs
where the terminal condition depends on an exponential martingale process,
as in the mathematical finance applications}.\ To the\ best of our
knowledge, no existence or uniqueness result is known to cover the BSDEs
with such general drivers even though some special cases were considered in
Cetin (2005), Briand et. al (2007), Briand and Hu (2008) and Richou (2011).

\begin{condition}
(i) $H$ is a positive and continuously differentiable function which is
bounded away from zero. \newline
(ii) $f(t,x)\geq 0$ on $[0,T]\times 
\mathbb{R}
,$\ and satisfies $f(t,X)\in L_{F}^{1}$ where $X$ is as in (\ref{p0}) and .%
\textit{\ }\newline
(iii) \textit{the function }$\lambda $ is such that 
\begin{equation}
2\left\vert u-v\right\vert \cdot \left\vert u\lambda (t,M-\ln
u/H(t))-v\lambda (t,M-\ln v/H(t))\right\vert \leq \varphi (t).\kappa
(\left\vert u-v\right\vert ^{2}),  \label{lambda}
\end{equation}%
for $0\leq t\leq T$ \ and $0<u,v\leq 1$, where the function $\kappa $\
satisfies the conditions given\ in\ Lemma \ref{Bihari} and $\varphi $ is a
continuous function. \newline
(iv) the terminal condition $g(x)$ is bounded from below such that $%
g(X_{T})\in L_{F_{T}}^{1}$. \newline
(v) $h(t,x)$ is bounded on $[0,T]\times 
\mathbb{R}
$ \newline
(v)$^{\prime }$ there is a constant $\gamma \in (0,1)$ such that $%
2H(t)f(t,x)-\frac{h^{2}(t,x)}{\gamma }\geq 0,$\ uniformly on $[0,T]\times 
\mathbb{R}
$\ and $h(t,X)\in L_{F}^{2}.$
\end{condition}

We now state a technical lemma that will be needed in the proof of the main
result of the paper.\newline

\begin{lemma}
\label{kappa} Let $0<r\leq 1$, $0<\epsilon <e^{-r}$ and define a function $%
\kappa ^{\epsilon ,r}(.)$ as 
\begin{equation}
\kappa ^{\epsilon ,r}(x)=\left\{ 
\begin{array}{cc}
x(\ln (x^{-1}))^{r}, & 0<x\leq \epsilon \\ 
\kappa ^{\epsilon ,r}(\epsilon )+\dot{\kappa}^{\epsilon ,r}(\epsilon
)(x-\epsilon ), & x>\epsilon%
\end{array}%
\right.  \label{kap}
\end{equation}%
where $\dot{\kappa}^{\epsilon ,r}(\epsilon )=$\ $\lim\limits_{x\rightarrow
\epsilon ^{-}}\dot{\kappa}^{\epsilon ,r}(x)$. Then $\kappa ^{\epsilon ,r}(.)$
is an increasing, non-negative and concave (differentiable) function
satisfying \textit{\newline
(i) }$\lim\limits_{x\rightarrow 0^{+}}\kappa ^{\epsilon ,r}(x)=0$. \textit{%
\newline
(ii) }For all $0<r<1$ and $0<\epsilon <e^{-r},\ \exists \epsilon _{1}\in
(0,e^{-1})$ such that $\kappa ^{\epsilon ,r}(.)<\kappa ^{\epsilon
_{1},1}(.), $ uniformly in $x.$\textit{\newline
(iii) there exists a constant }$C=C(\epsilon ,r)>0$\textit{\ such that }$%
\left\vert x-y\right\vert \left\vert \kappa ^{\epsilon ,r}(x)-\kappa
^{\epsilon ,r}(y)\right\vert \leq C\kappa ^{\epsilon ,r}(\left\vert
x-y\right\vert ^{2})$, for all $x,y$ in $(0,1]$. In particular, $\left\vert
x-y\right\vert \left\vert \kappa ^{\epsilon ,r}(x)-\kappa ^{\epsilon
,r}(y)\right\vert \leq C_{1}\left\vert x-y\right\vert ^{2}\ln (\left\vert
x-y\right\vert ^{-2})$ also holds, with $C_{1}\leq C(\epsilon ,r).$ \textit{%
\newline
(iv) }$\dint\limits_{0^{+}}\frac{1}{\kappa ^{\epsilon ,r}(x)}dx=\infty $, for%
$\ 0<r\leq 1$.
\end{lemma}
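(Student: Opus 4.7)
The plan is to first verify the preliminary claims---non-negativity, monotonicity, concavity, and differentiability---by direct computation on the smooth piece and then to dispatch parts (i)--(iv) in turn. Writing $u(x)=\ln(1/x)$, on $(0,\epsilon)$ one finds $\dot{\kappa}^{\epsilon,r}(x)=u^{r-1}(u-r)$ and $\ddot{\kappa}^{\epsilon,r}(x)=-\frac{r}{x}u^{r-2}(u-r+1)$; the hypothesis $\epsilon<e^{-r}$ forces $u>r$ on $(0,\epsilon]$, so $\dot{\kappa}>0$, and since $r\leq 1$ one gets $\ddot{\kappa}<0$. Because the linear extension matches the left derivative at $\epsilon$ and has constant slope, $\kappa^{\epsilon,r}$ is $C^{1}$, positive, strictly increasing, and concave on $(0,\infty)$; the limit $\kappa^{\epsilon,r}(0^{+})=0$ in (i) is immediate from $x(\ln(1/x))^{r}\to 0$, and part (iv) follows from the substitution $u=\ln(1/x)$, which reduces the integral near zero to $\int^{\infty}u^{-r}\,du$, divergent for every $0<r\leq 1$.

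The heart of the argument is part (iii), which I would attack by first reducing the estimate to a scalar inequality via subadditivity. Because $\kappa\equiv\kappa^{\epsilon,r}$ is concave, non-negative, and satisfies $\kappa(0^{+})=0$, the chord-through-the-origin inequalities $\kappa(a)\geq\frac{a}{a+b}\kappa(a+b)$ and $\kappa(b)\geq\frac{b}{a+b}\kappa(a+b)$ add to give $\kappa(a+b)\leq\kappa(a)+\kappa(b)$ for all $a,b>0$, hence $|\kappa(x)-\kappa(y)|\leq\kappa(|x-y|)$. Setting $t=|x-y|$, it therefore suffices to show $t\,\kappa(t)\leq C\kappa(t^{2})$ on $(0,1]$. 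For $t\leq\sqrt{\epsilon}$ both arguments lie on the explicit piece, and the identity $\kappa(t^{2})=2^{r}t\,\kappa(t)$ gives the bound with constant $2^{-r}$; for $t\in(\sqrt{\epsilon},1]$ both sides are positive continuous functions on a compact interval, so the ratio is uniformly bounded. The ``in particular'' statement then follows from $(\ln t^{-2})^{r-1}\leq 1$ whenever $t^{2}\leq e^{-1}$, with compactness handling the remaining range.

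For part (ii), the idea is to choose $\epsilon_{1}\in(0,e^{-1})$ so small that the linear extension of $\kappa^{\epsilon_{1},1}$, whose slope $\ln(1/\epsilon_{1})-1$ can be made arbitrarily large, dominates the bounded-slope function $\kappa^{\epsilon,r}$ everywhere. I would split the half-line into three ranges: on $(0,\epsilon_{1}]$, $\ln(1/x)>1$ gives $(\ln(1/x))^{r}<\ln(1/x)$ because $r<1$; on $(\epsilon_{1},\epsilon]$, the ratio $\kappa^{\epsilon,r}(x)/x=(\ln(1/x))^{r}$ is bounded above by $(\ln(1/\epsilon_{1}))^{r}$ while $\kappa^{\epsilon_{1},1}(x)/x\geq\ln(1/\epsilon_{1})-1$, and the latter exceeds the former once $\epsilon_{1}$ is small enough; on $(\epsilon,\infty)$ both functions are linear, and the slope and base-point comparisons established on the preceding range extend the strict inequality. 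The main obstacle I anticipate is in (iii): the passage from the subadditive modulus $|x-y|$ to the squared argument $|x-y|^{2}$ rests on the precise logarithmic form of $\kappa$, and the switch between the two pieces of the definition at $t=\sqrt{\epsilon}$ requires the constants from the compactness argument to be matched carefully with those coming from the explicit piece.
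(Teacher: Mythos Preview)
Your argument is essentially correct and, in parts (iii) and (iv), genuinely different from the paper's. The paper attacks (iii) by a direct case analysis on the pair $(x,y)$: it distinguishes $x\geq\epsilon$ (linear piece, bound the slope $\dot\kappa(\epsilon)$ by $C[\ln d^{-2}]^{r}$), and $x<\epsilon$ with the two subcases $d^{2}\leq x$ (mean-value theorem plus monotonicity of $\dot\kappa$) and $x<d^{2}\leq y$ (an add-and-subtract trick). Your route---subadditivity of a concave function with $\kappa(0^{+})=0$ to reduce to the single-variable inequality $t\,\kappa(t)\leq C\kappa(t^{2})$, then the explicit identity $\kappa(t^{2})=2^{r}t\,\kappa(t)$ near $0$ combined with compactness---is cleaner and avoids the case split entirely. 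Likewise for (iv) the paper passes through the comparison (ii) to reduce to $r=1$, whereas your substitution $u=\ln(1/x)$ handles all $r\in(0,1]$ at once without invoking (ii); this has the small advantage of making (ii) and (iv) logically independent.

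One slip to repair in (iii): you write ``for $t\leq\sqrt{\epsilon}$ both arguments lie on the explicit piece,'' but since $\epsilon<1$ one has $\sqrt{\epsilon}>\epsilon$, so for $t\in(\epsilon,\sqrt{\epsilon}]$ the argument $t$ already sits on the linear extension while $t^{2}$ does not, and the identity $\kappa(t^{2})=2^{r}t\,\kappa(t)$ fails there. The fix is immediate: take the threshold at $t=\epsilon$ (so that both $t\leq\epsilon$ and $t^{2}\leq\epsilon^{2}<\epsilon$ lie on the explicit piece), and let your compactness argument absorb the enlarged interval $(\epsilon,1]$; nothing else changes. Your treatment of (ii) is close in spirit to the paper's but organized differently---you shrink $\epsilon_{1}$ to drive the slope $\ln(1/\epsilon_{1})-1$ above the fixed quantities coming from $\kappa^{\epsilon,r}$, while the paper first handles $\epsilon<e^{-1}$ with $\epsilon_{1}=\epsilon$ and then treats $e^{-1}\leq\epsilon<e^{-r}$ via the explicit choice $\epsilon_{1}\leq e^{r-2}$; both are valid.
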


\begin{proof}
For simplicity, we write $\kappa =\kappa ^{\epsilon ,r}$. Note that $\kappa
(x)$\ describes a line with a positive slope $\dot{\kappa}(\epsilon )=[\ln
(\epsilon ^{-1})]^{r}\{1-r/\ln (\epsilon ^{-1})\}$ for $x>\epsilon $.\ It is
straightforward to see that $\dot{\kappa}(x)>0$ for $x\leq \epsilon $, $%
\ddot{\kappa}(x)$ $<0$ for $x>0$. Hence $\kappa (.)$\ is a (strictly)
increasing concave function and the result $\lim\limits_{x\rightarrow
0^{+}}\kappa ^{\epsilon ,r}(x)=0$ in part (i) is a straightforward
application of L'Hopital's rule. Moreover, for fixed $r$,\ the expression $%
(\ln (x^{-1}))^{p}$ is strictly increasing in $p$ for $0<x\leq \epsilon
<e^{-1}$ and $r\leq p\leq 1$. So, the strict inequality $\kappa ^{\epsilon
,r}(x)<\kappa ^{\epsilon _{1},1}(x)$ in (ii) holds for all $0<x\leq \epsilon
_{1}=\epsilon <e^{-1}$. To ensure this inequality is also valid for larger $%
x $ and $\epsilon $\ values, let $e^{-1}\leq \epsilon <e^{-r}.$ Since $\dot{%
\kappa}^{\epsilon ,r}(.)$ is a decreasing function, we have $1-r=\dot{\kappa}%
^{\epsilon ,r}(e^{-1})\geq \dot{\kappa}^{\epsilon ,r}(\epsilon )$. Then we
can select $\epsilon _{1}$ such that $\dot{\kappa}^{\epsilon
_{1},1}(\epsilon _{1})\geq 1-r$. For example, $0<\epsilon _{1}\leq
e^{r-2}<e^{-1}$ will do it, proving the part (ii).\ 

To show (iii), without loss of generality, assume that $0<x<y\leq 1$ and let 
$d=y-x>0$. For $x\geq \epsilon $, we have $\kappa (y)-\kappa (x)=d\dot{\kappa%
}(\epsilon ),$ where $\dot{\kappa}(\epsilon )\leq \lbrack \ln (\epsilon
^{-1})]^{r}\leq C[\ln (d^{-2})]^{r}$, with $C=\max \{1,(\frac{\ln (\epsilon )%
}{2\ln (1-\epsilon )})^{r}\}$, depending on whether $d^{2}\geq \epsilon $
holds \footnote{%
Since the expression $(\frac{\ln (\epsilon )}{2\ln (1-\epsilon )})^{r}$ is
increasing in $r$ and decreasing in $\epsilon $, by choosing $\epsilon
\approx e^{-r}$ for each $r,\ C$ can be selected to be $\lim\limits_{%
\epsilon \rightarrow e^{-1}}\frac{\ln (\epsilon )}{2\ln (1-\epsilon )}%
=\allowbreak 1.\,\allowbreak 090\,1$.}. So, 
\begin{equation*}
d\left\vert \kappa (y)-\kappa (x)\right\vert \leq Cd^{2}[\ln
(d^{-2})]^{r}=C\kappa ^{\epsilon ,r}(d^{2}).
\end{equation*}%
For $x<\epsilon ,$\ since $d<y$, there are two other possible cases, namely, 
$d^{2}\leq x\leq y$ or $x<d^{2}\leq y.$ When $d^{2}\leq x\leq y$, by mean
value theorem, $\kappa (y)-\kappa (x)=d\dot{\kappa}(z),$ for some $z$
between $x$ and $\min \{y,$ $\epsilon \}$. But since $\dot{\kappa}(.)$ is
strictly decreasing on $(0,\epsilon ]$ and $d^{2}\leq x\leq z$, we obtain $%
\kappa (y)-\kappa (x)\leq d\dot{\kappa}(d^{2})\leq d[\ln (d^{-2})]^{r}$. For
the case $x<d^{2}\leq y$, by adding and subtracting $x[\ln (y^{-1})]^{r}$ to 
$\kappa (y)-\kappa (x),$ we get $0<\kappa (y)-\kappa (x)=d[\ln
(y^{-1})]^{r}+x\{[\ln (y^{-1})]^{r}-[\ln (x^{-1})]^{r}\}$, where $\ln
(y^{-1})\leq \ln (d^{-2})<\ln (x^{-1})$. Then the inequality 
\begin{equation*}
\kappa (y)-\kappa (x)<d[\ln (y^{-1})]^{r}\leq d[\ln (d^{-2})]^{r}
\end{equation*}%
easily follows, and hence, when $x<\epsilon $, (iii) holds with $C=1$, .
Moreover, by part (ii), $\exists \epsilon _{1}\in (0,e^{-1})$ such that $%
\kappa ^{\epsilon ,r}(\left\vert x-y\right\vert ^{2})<\kappa ^{\epsilon
_{1},1}(\left\vert x-y\right\vert ^{2})$ and hence the result follows for
all $r\in (0,1]$. The part (iv) is simply a result of part (ii): $\exists
\epsilon _{1}\in (0,e^{-1})$ such that, for all $0<r<1,\delta >0$ and $%
0<\epsilon <e^{-r},\ $%
\begin{equation*}
\dint\limits_{0}^{\delta }\frac{dx}{\kappa ^{\epsilon ,r}(x)}\geq
\dint\limits_{0}^{\delta }\frac{dx}{\kappa ^{\epsilon _{1},1}(x)}\geq
\dint\limits_{0}^{\min \{\delta ,\epsilon _{1})}\frac{-dx}{x\ln (x)}=\infty .
\end{equation*}%
\ 
\end{proof}

\begin{theorem}
\label{Main} For $T>0$ and $p\geq 1,$ let the SDE in (\ref{p0})\textit{\
have a unique solution }$X$\textit{\ in }$L_{F}^{p}[0,T]$\textit{\ with a.s.
continuous paths.\ }Moreover, let the assumptions (i)-(iv), and (v) or (v)$%
^{\prime }$ of Condition 1 hold for the BSDE (\ref{18}) with driver $%
F(t,x,y,z)$\ as in (\ref{driver}). Then the BSDE \textit{(\ref{18})} has at
most one solution $(Y,Z)$\ in $S_{F_{T}}^{1}\times L_{F}^{2}$ such that $Y$
is bounded from below.
\end{theorem}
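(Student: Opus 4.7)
The plan is to linearize the quadratic $z$-term in the driver by an exponential transformation, reduce the uniqueness question to an Osgood-type integral inequality, and finish with Bihari's lemma. Let $(Y^1,Z^1)$ and $(Y^2,Z^2)$ be two solutions in $S^1_{F_T}\times L^2_F$ with $Y^i\ge M$ a.s.\ for some constant $M$ (picked below the a.s.\ lower bound of both processes). Define
\[
u^i(t):=\exp\bigl\{H(t)(M-Y^i(t))\bigr\}\in(0,1],\qquad U^i(t):=-u^i(t)H(t)Z^i(t).
\]
Applying Itô's formula and using $Y^i=M-\ln u^i/H(t)$, the $-\tfrac12 H(t)z^2$ term of $F$ cancels the $\tfrac12 u^i H(t)^2 (Z^i)^2$ Itô correction exactly, so
\[
du^i=\Bigl[\tfrac{\dot H(t)}{H(t)}\,u^i\ln u^i+H(t)f(t,X_t)u^i-H(t)\,u^i\lambda\bigl(t,M-\tfrac{\ln u^i}{H(t)}\bigr)-h(t,X_t)U^i\Bigr]dt+U^i\,dW_t,
\]
with $u^i(T)=\exp\{H(T)(M-g(X_T))\}$, which is common to the two solutions. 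The $-hU^i$ drift is removed by Girsanov: under assumption (v), $h$ is bounded, so $\tilde W_t:=W_t-\int_0^t h(s,X_s)ds$ is a Brownian motion under an equivalent measure $\tilde P$, and only the first three drift terms survive. (Under the alternative (v)$'$, I would instead stay under $P$ and absorb $2|h\Delta u\,\Delta U|\le \gamma^{-1}h^2(\Delta u)^2+\gamma(\Delta U)^2$ using the positivity $2Hf-h^2/\gamma\ge0$ together with $u^i H f\ge 0$.)

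Next, set $\Delta u:=u^1-u^2$, $\Delta U:=U^1-U^2$. Because $u^i\in(0,1]$, $(\Delta u)^2\le1$ is bounded, so $(\Delta u)^2$ and $(\Delta U)^2\le H^2(|Z^1|+|Z^2|)^2$ are $\tilde P$-integrable. Apply Itô to $(\Delta u)^2$ on $[t,T]$ using $\Delta u(T)=0$ and take $\tilde E_t$:
\[
(\Delta u)^2(t)+\tilde E_t\!\!\int_t^{T}\!(\Delta U)^2ds=-2\,\tilde E_t\!\!\int_t^{T}\!\Delta u\,(\tilde b^{1}-\tilde b^{2})\,ds,
\]
where $\tilde b^i$ denotes the (Girsanov-transformed) drift. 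Dropping the $\int(\Delta U)^2\ge0$ term, it remains to bound $2|\Delta u\,(\tilde b^1-\tilde b^2)|$.

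The driver difference splits into three pieces. The $\lambda$-piece is controlled directly by assumption (iii): $2|\Delta u|\cdot H(t)\,|u^1\lambda(t,M-\tfrac{\ln u^1}{H})-u^2\lambda(t,M-\tfrac{\ln u^2}{H})|\le H(t)\varphi(t)\,\kappa((\Delta u)^2)$. The $Hf$-piece is linear in $\Delta u$ with a bounded coefficient and, since $\kappa(x)\ge x$ for $x$ near $0$ (part (ii)/(iv) of Lemma~2 with $r=1$), is dominated by a constant times $\kappa((\Delta u)^2)$ on $(\Delta u)^2\in(0,1]$. The delicate piece is $\tfrac{\dot H}{H}[u^1\ln u^1-u^2\ln u^2]$: here the map $\phi(u)=u\ln u$ is only Osgood-continuous on $(0,1]$, which is precisely what Lemma~2(iii) is designed to handle, yielding $2|\Delta u|\,|\phi(u^1)-\phi(u^2)|\le C\,\kappa^{\epsilon,r}((\Delta u)^2)$. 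Summing,
\[
2|\Delta u\,(\tilde b^1-\tilde b^2)|\le \psi(t)\,\kappa((\Delta u)^2)
\]
for some continuous $\psi$ (combining $\kappa$ with $\kappa^{\epsilon,r}$ via Lemma~2(ii) so that a single $\kappa$ works).

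Finally let $\phi(t):=\tilde E[(\Delta u)^2(t)]$. Taking $\tilde E$ and using the concavity of $\kappa$ (Jensen), $\phi(t)\le\int_t^T\psi(s)\,\kappa(\phi(s))\,ds$. Reversing time, $g(\tau):=\phi(T-\tau)$ satisfies $g(\tau)\le\int_0^\tau\psi(T-s)\,\kappa(g(s))\,ds$, so Lemma~1 forces $g\equiv0$, i.e.\ $u^1\equiv u^2$ and hence $Y^1\equiv Y^2$ $P$-a.s. The equality of $Z^1$ and $Z^2$ (as elements of $L^2_F$) then follows from the uniqueness of the martingale representation applied to $Y^1-Y^2\equiv0$. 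The main obstacle I expect is the third drift piece above: controlling $u\ln u$ near $u=0$ in a way compatible with a single Osgood function $\kappa$ — this is exactly why Lemma~2 carefully patches $\kappa^{\epsilon,r}$ on $(0,\epsilon]$ with its tangent line on $(\epsilon,\infty)$ and establishes the almost-Lipschitz estimate (iii).
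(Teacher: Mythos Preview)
Your overall architecture matches the paper's proof: exponential transformation $u^i=\exp\{H(t)(M-Y^i)\}$ to kill the quadratic $z$-term, It\^o on $(\Delta u)^2$, Lemma~2(iii) for the $u\ln u$ contribution, assumption (iii) for the $\lambda$-contribution, Jensen plus Bihari to close. The one genuine difference is your treatment of the $h$-term: you remove it by Girsanov, whereas the paper stays under $P$, uses $2|h\,\Delta u\,\Delta U|\le \tfrac12(\Delta U)^2+2K^2(\Delta u)^2$ (with $K=\sup|h|$), absorbs $\tfrac12(\Delta U)^2$ into the quadratic-variation term on the left, and then clears the extra $2K^2(\Delta u)^2$ with Gronwall before invoking Bihari. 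Both routes are legitimate under (v); your Girsanov route is slightly cleaner but you should say a word about why the stochastic integral $\int 2\Delta u\,\Delta U\,d\tilde W$ has zero $\tilde P$-expectation (boundedness of $\Delta u$ plus a standard localization suffices).

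There is, however, a real gap in your handling of the $Hf$-piece. You write that it ``is linear in $\Delta u$ with a bounded coefficient'' and then dominate $2Hf(\Delta u)^2$ by $C\kappa((\Delta u)^2)$. But Condition~1(ii) only gives $f\ge 0$ and $f(\cdot,X)\in L^1_F$; nothing says $f$ (or $Hf$) is bounded, so this step fails as written. The correct observation---and this is exactly what the paper does---is that the term $-2\int_t^T H(s)f(s,X_s)(\Delta u)^2(s)\,ds$ appears with a \emph{minus} sign on the right-hand side of your identity for $(\Delta u)^2(t)$; since $H>0$ and $f\ge 0$, you simply move it to the left as a nonnegative contribution and drop it. In other words, you should not take absolute values on this piece: use its sign, not a (nonexistent) bound on its coefficient. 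Once you make that correction, your argument goes through and is essentially the paper's proof with the Girsanov variant for the $h$-term.
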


\begin{proof}
If the pair $(Y,Z)$ is such a solution, let $M$ be a lower bound for $Y$ and
consider the exponential transformation $U(t)\triangleq \exp
(-H(t).(Y(t)-M)) $, for $t\in \lbrack 0,T]$. Clearly, $U(.)$ is bounded a.s.
(between 0 and 1) and $Y(t)=M-\ln U(t)/H(t)$ can be uniquely recovered from $%
U(t)$. The same idea applies to any solution $(Y^{\prime },Z^{\prime })$ to
the equation \textit{(\ref{18}),} and hence the problem reduces to showing
the uniqueness of the solutions to the BSDE for the transformed process $%
U(.) $.\ For simplicity of the notation, we take $M=0$. By Ito's rule and (%
\ref{driver}), a pair $(U,\Lambda )$ with $\Lambda (t)\triangleq -H(t)UZ(t)$
and $U(t)\triangleq \exp (-H(t).Y(t))$\ satisfies the nonlinear BSDE 
\begin{equation}
dU(t)=\{\frac{\dot{H}}{H}\ln U-H\lambda (t,\frac{-\ln U}{H})+Hf(t,X)-\frac{%
h(t,X)\Lambda }{U}\}U(t)dt+\Lambda (t)dW(t)  \label{bsde1}
\end{equation}%
with the terminal condition $U(T)=\exp (-g(X(T)))$ and $0<U(.)\leq 1$ a.s.
on $[0,T]$.

Now, let $(U_{1},\Lambda _{1})$ and $(U_{2},\Lambda _{2})$\ be two (bounded)
solutions to the BSDE \textit{(\ref{bsde1})}. Then, by applying the Ito's
rule to $(U_{1}-U_{2})^{2}$ and rearranging the terms, $P$-a.s, the
expression 
\begin{equation}
\left\vert U_{1}(t)-U_{2}(t)\right\vert ^{2}+\int\limits_{t}^{T}(\Lambda
_{1}-\Lambda
_{2})^{2}(s)ds+\int\limits_{t}^{T}2H(s)f(s,X)(U_{1}-U_{2})^{2}(s)ds
\label{pos}
\end{equation}%
\ can be written as 
\begin{eqnarray}
&&-\int\limits_{t}^{T}2(U_{1}-U_{2})(\Lambda _{1}-\Lambda
_{2})(s)dW(s)-\int\limits_{t}^{T}[2\frac{\dot{H}}{H}(U_{1}-U_{2})(U_{1}\ln
U_{1}-U_{2}\ln U_{2})(s)]ds  \label{ineq0} \\
&&+\int\limits_{t}^{T}2H(U_{1}-U_{2})[U_{1}\lambda (s,\frac{-\ln U_{1}}{H}%
)-U_{2}\lambda (s,\frac{-\ln U_{2}}{H})](s)ds+\int\limits_{t}^{T}2h(s,X)(%
\Lambda _{1}-\Lambda _{2})(U_{1}-U_{2})(s)ds,  \notag
\end{eqnarray}%
a.s. for $0\leq t<T$. Note that the integral $\int%
\limits_{t}^{T}2H(s)f(s,X(s))(U_{1}-U_{2})(s)]^{2}ds$ in \textit{(\ref{pos})}
is non-negative a.s. by the positivity assumptions on $H$ and $f$, implying
that both \textit{(\ref{pos})} and \textit{(\ref{ineq0})} are non-negative.
In \textit{(\ref{ineq0})}, the first integral is a martingale, and by Lemma %
\ref{kappa} with $r=1$ and $\epsilon $ being sufficiently close to $e^{-r}$,
the expression $(U_{1}-U_{2})(U_{1}\ln U_{1}-U_{2}\ln U_{2})$\ in the second
integral satisfies 
\begin{equation*}
\left\vert U_{1}-U_{2}\right\vert \left\vert U_{1}\ln U_{1}-U_{2}\ln
U_{2}(.)\right\vert \leq C\kappa ^{\epsilon ,1}(\left\vert
U_{1}-U_{2}\right\vert ^{2}(.)).
\end{equation*}
Moreover, thanks to the assumption \textit{(\ref{lambda})} for $\lambda ,$
the third integral of \textit{(\ref{ineq0})} is bounded\ by $%
\int\limits_{t}^{T}H(s)\left\vert \varphi (s)\right\vert \kappa (\left\vert
U_{1}(s)-U_{2}(s)\right\vert ^{2})ds$, for some function $\kappa $\ as\ in\
Lemma \ref{Bihari}. Finally, let the assumption (v) of Condition 1 hold and $%
K$ be an upper bound for $\left\vert h(t,x)\right\vert $. Then, applying the
inequality $2\left\vert ab\right\vert \leq \gamma a^{2}+b^{2}/\gamma $ to
the integrand of the last term of \textit{(\ref{ineq0})} with $\gamma =2K$,
we get 
\begin{eqnarray*}
\left\vert \int\limits_{t}^{T}2h(s,X)(\Lambda _{1}-\Lambda
_{2})(U_{1}-U_{2})(s)ds\right\vert  &\leq &K\int\limits_{t}^{T}2\left\vert
(\Lambda _{1}-\Lambda _{2})(U_{1}-U_{2})\right\vert (s)ds \\
&\leq &\frac{1}{2}\int\limits_{t}^{T}\left\vert \Lambda _{1}-\Lambda
_{2}\right\vert ^{2}ds+2K^{2}\int\limits_{t}^{T}\left\vert
U_{1}-U_{2}\right\vert ^{2}ds.
\end{eqnarray*}

Therefore, taking the expected value of both \textit{(\ref{pos}) and } 
\textit{(\ref{ineq0})}, and combining with the terms above, the following
upper bound for $E[\left\vert U_{1}-U_{2}\right\vert
^{2}(t)+\int\limits_{t}^{T}\frac{\left\vert \Lambda _{1}-\Lambda
_{2}\right\vert ^{2}(s)}{2}ds]$ is obtained:%
\begin{equation}
2CE\int\limits_{t}^{T}\left\vert \frac{\dot{H}}{H}\right\vert \kappa
^{\epsilon ,1}(\left\vert U_{1}-U_{2}\right\vert
^{2})(s)ds+E\int\limits_{t}^{T}\left\vert \varphi \right\vert H\kappa
(\left\vert U_{1}-U_{2}\right\vert
^{2})(s)ds+2K^{2}E\int\limits_{t}^{T}\left\vert U_{1}-U_{2}\right\vert
^{2}(s)ds  \notag
\end{equation}%
which is further bounded by $E\int\limits_{t}^{T}\upsilon \xi (\left\vert
U_{1}-U_{2}\right\vert ^{2})(s)ds+2K^{2}\int\limits_{t}^{T}E\left\vert
U_{1}-U_{2}\right\vert ^{2}(s)ds$\ where $\xi (x)=\kappa ^{\epsilon
,1}(x)+\kappa (x)$ is concave and $\upsilon (t)=\max \{H\varphi
(t),2C\left\vert \frac{\dot{H}}{H}(t)\right\vert ,$ satisfying the
assumptions of the Lemma \ref{Bihari}. Now, these bounds imply, in
particular, that 
\begin{equation*}
E\left\vert U_{1}(t)-U_{2}(t)\right\vert ^{2}\leq
E[\int\limits_{t}^{T}\upsilon (s)\xi (\left\vert
U_{1}(s)-U_{2}(s)\right\vert ^{2})ds+2K^{2}\int\limits_{t}^{T}E\left\vert
U_{1}(s)-U_{2}(s)\right\vert ^{2},
\end{equation*}%
and hence by an appeal to the Gronwall's and Jensen's inequalities, we
deduce 
\begin{equation*}
E\left\vert U_{1}-U_{2}\right\vert ^{2}(t)\leq
e^{2K^{2}(T-t)}E[\int\limits_{t}^{T}\upsilon \xi (\left\vert
U_{1}-U_{2}\right\vert ^{2})(s)ds\leq \int\limits_{t}^{T}\upsilon \xi
(E\left\vert U_{1}-U_{2}\right\vert ^{2})(s)ds.
\end{equation*}%
Then, by Bihari's inequality, for all $t$, $E\left\vert
U_{1}(t)-U_{2}(t)\right\vert ^{2}=0$ a.s., implying also that $U_{1}=U_{2}$
a.s. and consequently $\Lambda _{1}=\Lambda _{2}$ a.s.. By transforming back
to $(Y,Z)$, the result follows. The proof is similar when the assumption (v)
of Condition 1 is replaced with the alternate condition (v)$^{\prime }$. In
that case, for $0<\gamma <1,$\ we again apply the inequality $2\left\vert
ab\right\vert \leq \gamma a^{2}+b^{2}/\gamma $ to $2(\Lambda _{1}-\Lambda
_{2})h(U_{1}-U_{2})$\ but instead with the parameters $a=(\Lambda
_{1}-\Lambda _{2})$\ and $b=h(U_{1}-U_{2});$ combine the resulting integrals
with the terms of \textit{(\ref{pos}) }and finally apply the Bihari's and
Jensen's inequalities (without an appeal to the Gronwall's inequality) to
get the result\textit{.}
\end{proof}

\begin{remark}
(a) Some examples for the function $\lambda $, satisfying the condition (iv)
of the Theorem, are given below: \textit{\newline
(i) Let }$\lambda _{1}(t,u)=\alpha (t)u^{r},$ where $\alpha (.)$\ is a
(positive) continuous function and $0<r\leq .$ The corresponding concave
function $\kappa =\kappa _{1}$ in \textit{(\ref{lambda}) }is actually given
by \textit{(\ref{kap}) of Lemma \ref{kappa}: }$\kappa _{1}(x)=$ $\kappa
^{\epsilon ,r}(x)$ for some $0<\epsilon <e^{-r}$\textit{. Note that }$%
\lambda _{1}(t,u)$ is also concave in $u$.\textit{\newline
(ii)Let }$\lambda _{2}(t,u)=e^{-\beta (t)u},$ where $\beta :[0,1]\rightarrow
\lbrack 0,\infty )\ $is a continuous function. Here, $\lambda _{2}(t,.)$ is
a convex function and the corresponding concave function in \textit{(\ref%
{lambda}) is}$\ \kappa _{2}(x)=x.$ \textit{\newline
(iii) Consider }$\lambda _{3}(t,u)=\lambda _{1}(t,u)+\lambda _{2}(t,u),$%
\textit{\ as a sum of a concave and a convex function. Now, the
corresponding }$\kappa _{3}(.)$\textit{\ would be taken as} $\kappa
_{1}(.)+\kappa _{2}(.)$ or $\max (\kappa _{1}(.),\kappa _{2}(.))$.\textit{%
\newline
(iv) Yet another example where the function }$\lambda $\textit{\ is
super-linear in }$\mathit{u}$\textit{\ is }$\lambda _{4}(t,u)=Cu\ln (u^{-1})$
and $\kappa _{4}(.)=Cu\ln (u^{-1})\ln (\ln (u^{-1})$. The reader is
encouraged to find other interesting examples.\textit{\newline
(b) }The existence of a (global) solution under the assumptions of the
Theorem \textit{\ref{Main}}\ (even with a bounded terminal condition and
time-homogenous parameters) is not guaranteed in general. Briand et. al
(2007) provides an example where an exponential moment condition on the
driver is violated. Similarly, the generalizations of the
existence-uniqueness results for the BSDEs with linear growth in $z$ (see
e.g. Fan and Jiang, 2010 and the references there) are not directly
applicable to the transformed BSDE \textit{(\ref{bsde1}) due to the
conditions on the functions }$f(t,x)$\textit{\ and }$\lambda (t,u)$. \textit{%
\newline
(c) }One can perhaps try a combination of the standard localization methods
and the Picard iterations (which also appeared in some of the papers cited
earlier) directly to the original BSDE \textit{(\ref{18}) or to (\ref{bsde1}%
) for the existence part. However, it is not the direction we follow in this
work.} Instead, we will exploit their connections with PDEs of the form (\ref%
{p2})-(\ref{driver}) in the next section by also providing an application to
a stochastic optimal control problem.
\end{remark}

\section{3. The PDE and FBSDE Representations}

In this section, our aim is to show the connections between the solution $%
(Y,Z)$\ of the Markovian FBSDE system (\ref{p0}) and that of the quasilinear
PDEs of the form (\ref{p2})-(\ref{driver}). Note that we haven't assumed any
conditions on the drift and diffusion parameters of the forward process $X$
so far (hence the PDE may be degenerate). Moreover, the conditions that we
imposed on the driver and the terminal condition are more general than the
standard regularity and growth conditions (e.g. Lipshitz condition,
boundedness of the derivatives of the coefficients, linear growth etc.) for
nonlinear PDEs to ensure the existence of a smooth solution to the PDE (\ref%
{p2})-(\ref{driver}). So we may only expect to have a generalized solution
(e.g. a viscosity solution) to such a PDE.

\subsection{3.1 PDE Characterization of the Problem}

By a heuristic application of the seminal result of Pardoux and Peng (1992)
and the setup above, if a function $V(t,x)$\ is a smooth solution to the
equation (\ref{p2}), then the pair $(Y_{t}^{s,x},Z_{t}^{s,x})$ with $%
Y_{t}=V(t,X_{t})$ and $Z_{t}=\sigma (t,X_{t})V_{x}(t,X_{t})$ can be shown to
be a solution to the BSDE 
\begin{eqnarray}
dY_{t}^{s,x} &=&-F(t,X_{t},Y_{t},Z_{t})dt+Z_{t}dW_{t}  \label{18} \\
Y_{T}^{s,x} &=&g(X_{T})  \notag
\end{eqnarray}%
with 
\begin{equation}
X_{t}=X_{t}^{s,x}=x+\int\limits_{s}^{t}\mu
(t,X_{r}^{s,x})dr+\int\limits_{s}^{t}\sigma (r,X_{r}^{s,x})dW_{r},
\label{18b}
\end{equation}%
and$\ F(t,x,y,z)$ as in (\ref{driver}).

\begin{remark}
\label{rep}(a) \textit{The representation of (\ref{18})-(\ref{18b}) as a
FBSDE system is not unique. Another representation may be given by the
following system, by eliminating the drift term of the forward process:} 
\begin{eqnarray}
\hat{X}_{t}^{s,x} &=&x+\int\limits_{s}^{t}\sigma (r,\hat{X}_{r})dW_{r}
\label{19} \\
Y_{t}^{s,x} &=&g(\hat{X}_{T})+\int\limits_{t}^{T}\hat{F}(r,\hat{X}%
_{r},Z_{r})dr-\int\limits_{t}^{T}Z_{r}dW_{r}  \notag
\end{eqnarray}%
\textit{where the new driver function} \textit{is }$\hat{F}%
(t,x,y,z)=F(t,x,y,z)+\frac{\mu (t,x)}{\sigma (t,x)}z$ (as long as the
Girsanov's theorem applies). \textit{Each representation has some advantages
depending on the complexity level of the forward and backward equations in (%
\ref{18})-(\ref{19}). In this section, the representation (\ref{18}) will be
used frequently based on the assumption that the forward state dynamics (\ref%
{18b}) has a unique solution.}\newline
(b) \textit{The existence-uniqueness of the solutions to a particular form
of (\ref{18}) was shown in Cetin (2005, section 2.2), thanks to its
stochastic control interpretation as a solution to the standard LQR problems.%
}
\end{remark}

\begin{corollary}
\label{uniq} Consider the assumptions of the Theorem \textit{\ref{Main} and
let }$F(t,x,y,z)$\textit{\ be given by }(\ref{driver}).\textit{\ }If $%
V(t,x)\in C^{1,2}([0,T]\times 
\mathbb{R}
)$\ satisfies the PDE (\ref{p2}), then we have $V(t,x)=Y_{t}^{t,x}\triangleq
Y^{t,x}(t)$\ for all $(t,x)\in \lbrack 0,T)\times 
\mathbb{R}
,$\ where the pair $(Y_{t}^{s,x},Z_{t}^{s,x})$ given by $Y(t)=V(t,X_{t})$
and $Z(t)=\sigma (t,X_{t})V_{x}(t,X_{t})$\ solves the system \textit{(\ref%
{18})-(\ref{18b}) uniquely}. Moreover, $V$ is the unique solution of the PDE.
\end{corollary}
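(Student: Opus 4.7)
The plan is to exploit the standard Feynman--Kac bridge between the PDE (\ref{p2}) and the FBSDE system (\ref{18})--(\ref{18b}), and then to invoke Theorem \ref{Main} to promote this bridge into a two-sided uniqueness statement. First, I would fix $(s,x) \in [0,T) \times \mathbb{R}$ and apply It\^o's formula to $V$ along the forward diffusion $X_t = X_t^{s,x}$ from (\ref{18b}); this yields
\[
dV(t, X_t) = \mathsf{L}V(t, X_t)\, dt + \sigma(t, X_t) V_x(t, X_t)\, dW_t,
\]
with $\mathsf{L}$ as defined in (\ref{p1}). Because $V$ solves (\ref{p2}), the drift above equals $-F(t, X_t, V(t, X_t), \sigma(t, X_t) V_x(t, X_t))$, so the pair $(Y_t, Z_t) := (V(t, X_t),\, \sigma(t, X_t) V_x(t, X_t))$ satisfies (\ref{18}) with terminal value $V(T, X_T) = g(X_T)$. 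Evaluating at $t = s$ and using $X_s^{s,x} = x$ delivers the identification $V(s, x) = Y_s^{s, x}$ directly.

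The next step is to check that this constructed pair actually lies in $S_{F_T}^1 \times L_F^2$ and --- the critical point --- that $Y$ is bounded from below, so that Theorem \ref{Main} may be invoked. Lower boundedness does not come for free from $g$ being bounded below; the natural route is to use the exponential transformation $U = \exp(-H(Y - M))$ from the proof of Theorem \ref{Main}, which confines the transformed unknown to $(0, 1]$ whenever $Y$ has a uniform lower bound $M$. Under the hypotheses of Theorem \ref{Main} (positivity of $H$ and $f$, lower boundedness of $g$, and the integrability conditions), a deterministic lower bound for $V(t, X_t)$ should be extractable either by a comparison-type estimate for (\ref{18}) or by a maximum-principle argument applied to the PDE itself. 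Once this is in hand, Theorem \ref{Main} forces $(Y_t^{s,x}, Z_t^{s,x})$ to be the unique solution of (\ref{18})--(\ref{18b}) in the prescribed class.

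For uniqueness of the PDE solution, I would take a second classical solution $\tilde V \in C^{1,2}([0,T] \times \mathbb{R})$ of (\ref{p2}) and run exactly the same It\^o computation along the same forward process $X^{s,x}$. This exhibits $(\tilde V(t, X_t),\, \sigma(t, X_t) \tilde V_x(t, X_t))$ as another admissible solution of (\ref{18})--(\ref{18b}), so the BSDE-level uniqueness just proved compels $\tilde V(t, X_t) = V(t, X_t)$ for all $t \in [s, T]$, $P$-a.s. Specializing to $t = s$, where $X_s^{s,x} = x$ is deterministic, yields $\tilde V(s, x) = V(s, x)$. Since $(s, x) \in [0,T) \times \mathbb{R}$ was arbitrary and the terminal values agree by construction, $\tilde V \equiv V$ on $[0,T] \times \mathbb{R}$.

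I expect the principal obstacle to be precisely the verification that $V(t, X_t)$ admits a (nonrandom) lower bound, since absent such a bound Theorem \ref{Main} is silent and the whole uniqueness chain collapses. Everything else --- the It\^o calculation, the representation $V(s, x) = Y_s^{s, x}$, and the transfer of BSDE uniqueness to PDE uniqueness by freezing the initial data --- is essentially bookkeeping once this technical hurdle is cleared.
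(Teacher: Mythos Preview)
Your proposal is correct and follows essentially the same route as the paper: apply It\^o's formula to $V(t,X_t^{s,x})$, use the PDE to identify the drift with $-F$, and then invoke Theorem~\ref{Main} for BSDE uniqueness, from which the PDE uniqueness is read off by freezing $t=s$. The paper's own proof is in fact terser than yours and does not pause to justify the lower bound on $V(t,X_t)$ or the membership $(Y,Z)\in S_{F_T}^1\times L_F^2$ that you rightly flag as the nontrivial technical point; it simply asserts that ``the result easily follows'' once the It\^o computation is done.
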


\begin{proof}
If $V(t,x)$\ is a classical solution to the PDE (\ref{p2}), then define $(%
\bar{Y}_{t}^{s,x},\bar{Z}_{t}^{s,x})$ depending on $X_{t}^{s,x}$
deterministically as $\bar{Y}_{t}=V(t,X_{t})$ and $\bar{Z}_{t}=\sigma
(t,X_{t})V_{x}(t,X_{t}).$ Applying Ito's rule to $\bar{Y}_{t}\triangleq
V(t,X_{t}),$ and by (\ref{p1})\ and (\ref{p2}), we get 
\begin{eqnarray*}
d\bar{Y} &=&\mathsf{L}V(t,X)dt+\sigma (t,X)V_{x}(t,X)dW \\
&=&-F(t,X,V(t,X),\sigma (t,X)V_{x}(t,X))dt+\sigma (t,X)V_{x}(t,X)dW \\
&=&-F(t,X,\bar{Y},\bar{Z})dt+\bar{Z}dW.
\end{eqnarray*}%
So, by the uniqueness of the solutions to \textit{(\ref{18}) }from Theorem %
\ref{Main}\textit{, }the result easily follows.
\end{proof}

\begin{remark}
The converse of the Corollary \ref{uniq} is also true in the sense that if
the triple $(X_{t}^{s,x},Y_{t}^{s,x},Z_{t}^{s,x})$\ solves the system 
\textit{(\ref{18})-(\ref{18b}) and possess some stability and path
regularity properties, then the deterministic function }$V(t,x)$ defined as $%
V(t,x)=Y_{t}^{t,x}$\ is a viscosity solution of the PDE (\ref{p2}). Such a
result is given by Briand and Hu (2008). The uniqueness may require some
extra monotonicity conditions on $F(.,.,y,.)$. We stay working with the
smooth solutions in this work.
\end{remark}

\subsection{3.2. A Stochastic Control Application}

Now consider the following controlled state dynamics $X_{t}=X_{t}^{u}$ with
a control-dependent drift term:%
\begin{eqnarray}
dX_{t} &=&(\mu (t,X_{t})+B(t,X_{t})u_{t})dt+\sigma (t,X_{t})dW(t),
\label{state1} \\
X_{0} &=&x_{0}>0  \notag
\end{eqnarray}%
where $\mu ,\sigma ,B:[0,T]\times 
\mathbb{R}
\rightarrow 
\mathbb{R}
$ are contiuous\ and $u$ belongs to the control space $\mathcal{U}$ of
square integrable real-valued adapted processes such that the equation (\ref%
{state1})\ also has a strong solution $X^{u}\in L_{F}^{2}$. Let the cost
functional be given by 
\begin{equation}
J^{u}(s,x)=E_{s,x}\int\limits_{s}^{T}[(X_{t}-\xi
(t))^{2}+k_{1}(t)u_{t}^{2})]dt+k_{2}(X_{T}-\xi (T))^{2}  \label{cost1}
\end{equation}%
where $k_{1}(.)>0$, $k_{2}\geq 0$, and $\xi (t)$ is a continuous function,
describing the target for the state process $X_{t}=X_{t}^{u}$ to approach or
stay close.\ Define the value function as $V(s,x)=\inf_{u}J^{u}(s,x)$ which
is finite since both $k_{1}(t)u_{t}^{2}$\ and $k_{2}(X_{T}-\xi (T))^{2}$\
are bounded from below. This formulation resembles the stochastic LQR
problems except that here the functions $\mu $ and $\sigma $ need not be
linear in $x$, and $B(t,x)$ may also depend on $x$. Assuming that the SDE (%
\ref{state1}) has a solution for a sufficiently rich set of the control
processes in $\mathcal{U}$, and the optimization problem (\ref{cost1}) is
solvable, we can identify a corresponding FBSDE system to characterize the
solution and solve it numerically.

By a formal application of the dynamic programming principle (DPP) of the
standard stochastic control theory (as in Fleming and Soner, 2006), the
value function should satisfy the HJB equation 
\begin{eqnarray}
v_{t}(t,x)+\frac{1}{2}\sigma ^{2}v_{xx}(t,x)+\inf_{u}\{(x-\xi
(t))^{2}+k_{1}(t)u^{2}+v_{x}(t,x)(\mu (t,x)+B(t,x)u)\} &=&0  \label{hjb0} \\
k_{2}(x-\xi (T))^{2} &=&v(T,x)  \notag
\end{eqnarray}%
where the infimum of the (Hamiltonian) expression$\ (x-\xi
(t))^{2}+k_{1}(t)u^{2}+v_{x}(t,x)(\mu (t,x)+B(t,x)u)$\ is obtained with $%
u^{\ast }(t,x)=-\frac{Bv_{x}(t,x)}{2k_{1}(t)}$. By writing this candidate
optimal control in the equation (\ref{hjb0}), we obtain a quasilinear PDE of
the form (\ref{p2}), given by (\ref{pde1}) below, with $F(t,x,y,z)=(x-\xi
(t))^{2}-\frac{1}{2}H(t,x)z^{2}$, where $H(t,x)=\frac{1}{2k_{1}(t)}(\frac{%
B(t,x)}{\sigma (t,x)})^{2}$. Note that the function $F$ is independent of $y$%
\ \footnote{%
It would depend on y linearly, if we considered a time-discounted cost
function.}\ and $H(t,x)$ may depend on $x$. In general, a classical solution
to the equation (\ref{hjb0})\ is not guaranteed to exist.\ However if $%
H(t,x)=H(t)$, and if a smooth solution to the corresponding HJB PDE (\ref%
{pde1}) exists, then\ the results of the previous section apply and we have
the following result:

\begin{theorem}
In the setting above, suppose that $H(t,x)$ is time-dependent only: $%
H(t,x)=H(t)$ and $F(t,x,y,z)=(x-\xi (t))^{2}-\frac{1}{2}H(t)z^{2}$.\ Assume
that for all $p\geq 2$, the SDE (\ref{state1})\ has a unique square
integrable solution $X^{u}\in S_{F}^{p}$,\ for $u=0$ and $u=u^{\ast }=-\frac{%
Bv_{x}(t,X_{t})}{2k_{1}(t)}$ where $v(t,x)\in C^{1,2}[0,T]\times 
\mathbb{R}
$ satisfies the quasilinear PDE 
\begin{equation}
v_{t}(t,x)+\frac{1}{2}\sigma ^{2}v_{xx}(t,x)+F(t,x,v,\sigma v_{x})=0,\text{ }%
v(T,x)=k_{2}(x-\xi (T))^{2}\text{.}  \label{pde1}
\end{equation}%
Moreover, let $\tilde{X}_{t}$\ denote the solution to the SDE (\ref{state1})
for $u=0$. Then,\textit{\newline
}(i) The pair $(Y_{t}^{s,x},Z_{t}^{s,x})$ with $Y_{t}=v(t,\tilde{X}_{t})$
and $Z_{t}=\sigma (t)v_{x}(t,\tilde{X}_{t})$ is a (unique) continuous
solution to the BSDE 
\begin{equation}
dY_{t}^{s,x}=-F(t,\tilde{X}_{t},Z_{t})dt+Z_{t}dW_{t}\text{, }%
Y_{T}^{s,x}=k_{2}(X(T)-\xi (T))^{2}  \label{bsde2}
\end{equation}%
in in $S_{F_{T}}^{1}\times L_{F}^{2}$ such that $Y$ is bounded from below.%
\textit{\newline
}(ii) The value function is given by $v(t,x)$ which is the unique smooth
solution of the PDE (\ref{pde1}) and satisfies $v(t,x)=Y_{t}^{t,x},$ for $%
x\in 
\mathbb{R}
$ and $t\in \lbrack 0,T)$.
\end{theorem}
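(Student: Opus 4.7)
The plan is to split the argument along the two items. Part (i) is essentially a specialization of Corollary~\ref{uniq} to this particular driver, so the work there is just verification that Condition~1 is in force. Part (ii) requires a classical verification-theorem argument: use It\^o on $v(t,X_t^u)$ for an arbitrary admissible control, read off $v\le J^u$ from the HJB inequality, and achieve equality with the candidate optimizer $u^{\ast}=-Bv_x/(2k_1)$.

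For part (i), I identify the driver pieces appearing in~(\ref{driver}): here $f(t,x)=(x-\xi(t))^2$, $h(t,x)\equiv 0$, $\lambda(t,y)\equiv 0$, and $H(t)$ is the given positive, continuously differentiable, bounded-below function. Conditions (i) and (iii)--(v) of Condition~1 are then immediate: $f\ge 0$ with $f(t,\tilde X)\in L^1_F$ following from $\tilde X\in S^p_F$ for $p\ge 2$; $\lambda\equiv 0$ satisfies~(\ref{lambda}) trivially for any admissible $\kappa$; the terminal value $g(x)=k_2(x-\xi(T))^2$ is non-negative; and $h\equiv 0$ trivially satisfies both (v) and (v)$'$. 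Since $v\in C^{1,2}$ solves the PDE~(\ref{pde1}), Corollary~\ref{uniq} applied to the uncontrolled forward process $\tilde X$ then gives that $(Y_t,Z_t)=(v(t,\tilde X_t),\sigma(t,\tilde X_t)v_x(t,\tilde X_t))$ is the unique $S^1_{F_T}\times L^2_F$ solution of~(\ref{bsde2}) that is bounded from below (the latter because $v(T,\cdot)\ge 0$ and the running cost $(x-\xi)^2\ge 0$ propagate a lower bound backwards).

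For part (ii), I fix $(s,x)$ and an arbitrary $u\in\mathcal{U}$ for which (\ref{state1}) admits a strong solution $X^u$, then apply It\^o to $v(t,X_t^u)$ on $[s,T]$. Using the PDE in the form
\begin{equation*}
v_t(t,x)+\tfrac{1}{2}\sigma^2 v_{xx}(t,x)+\mu(t,x)v_x(t,x)+(x-\xi(t))^2-\tfrac{B^2(t,x)v_x^2(t,x)}{4k_1(t)}=0,
\end{equation*}
together with the elementary inequality $k_1 u^2+Bu v_x\ge -B^2 v_x^2/(4k_1)$ with equality at $u=u^{\ast}(t,x)=-Bv_x/(2k_1)$, I obtain pointwise
\begin{equation*}
v_t+(\mu+Bu)v_x+\tfrac{1}{2}\sigma^2 v_{xx}\ \ge\ -(X_t^u-\xi(t))^2-k_1(t)u_t^2,
\end{equation*}
with equality when $u=u^{\ast}$. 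Integrating from $s$ to $T$, using the terminal condition $v(T,x)=k_2(x-\xi(T))^2$, and taking $\mathbb{E}_{s,x}$ yields $v(s,x)\le J^u(s,x)$ for every admissible $u$, and $v(s,x)=J^{u^{\ast}}(s,x)$ by the assumed existence of the strong solution $X^{u^{\ast}}$. Hence $v(s,x)=\inf_u J^u(s,x)$, and $v(s,x)=Y_s^{s,x}$ by part (i); uniqueness of the smooth PDE solution is already part of Corollary~\ref{uniq}.

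The one delicate step is the passage from the It\^o identity to the expectation inequality: the local martingale $\int_s^{\cdot}\sigma v_x(r,X_r^u)\,dW_r$ must be a genuine martingale in order to vanish in expectation. This is where I would use the standing assumption $X^u\in S^p_F$ for all $p\ge 2$ (both for $u=0$ and $u=u^{\ast}$), combined with the continuity of $\sigma$ and $v_x$ and a standard localization by stopping times $\tau_n=\inf\{t:|X_t^u|\ge n\}\wedge T$ followed by a uniform-integrability/dominated convergence step to remove the localization. The lower bound on $v$ (hence on $Y$) and the non-negativity of the running cost keep the limiting terms controlled. Outside this integrability book-keeping, the remaining steps are routine.
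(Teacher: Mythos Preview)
Your proposal is correct and follows essentially the same approach as the paper: part (i) is obtained by checking that the driver fits Condition~1 and invoking Theorem~\ref{Main}/Corollary~\ref{uniq}, and part (ii) is the classical verification argument the paper alludes to with the phrase ``arguments of the stochastic control theory for the classical solutions of the HJB equations.'' You have simply made explicit what the paper leaves as a citation, including the localization needed for the stochastic integral to vanish in expectation.
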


\begin{proof}
The part (i) directly follows from Theorem \ref{Main}, representations (\ref%
{18})-(\ref{18b}) and Corollary \ref{uniq}. When $u^{\ast }$\ is an
admissible control and value function is well-defined (finite), part (ii) is
a result of Corollary \ref{uniq} and the arguments of the stochastic control
theory for the classical solutions of the HJB equations.
\end{proof}

\begin{remark}
(a) Ideally, an applicable "verification" theorem for the control problem or
some a priory bounds for the processes $X,Y$ and $Z$ would be needed (since
we haven't assumed any Lipshitz or growth conditions on the SDE (\ref{state1}%
) explicitly) to get part (ii) of Theorem. In most cases, the value function
will be a viscosity solution to the PDE by a "formal" appeal to a version of
the DPP, if available. \textit{\newline
(b) Under the Lipshitz conditions on }$\mu $\textit{\ and }$\sigma $\textit{%
\ }and a boundedness assumption on $\sigma ,$\textit{\ Fuhrman et. al (2006)
showed that the value function is given by the maximal solution of the BSDE (%
}\ref{bsde2}), using some localization arguments. They also provide the LQR
example as a special case and consider more general applications where the
control set is constrained to take values from a closed set of $%
\mathbb{R}
$. The uniqueness to the solutions of the BSDEs (and hence the corresponding
PDEs) related to the LQR problems was also reported in Cetin (2005), by
exploiting the regularity properties of the explicit solution for the value
function.
\end{remark}

\begin{example}
Consider the following perturbed version of the LQR problem: 
\begin{eqnarray}
dX_{t} &=&(A(t)X_{t}-\delta X_{t}^{3}+B(t)u_{t})dt+\sigma (t)dW(t),
\label{pert} \\
X_{0} &=&x_{0}>0  \notag
\end{eqnarray}%
where the time dependent functions $A,B$ and $\sigma $\ are continuous,$\ B$
and $\sigma $\ are bounded away from zero on the interval $[0,T]$, and $u$
belongs to the control space $\mathcal{U}$ as before. The term $\delta $\ is
a small perturbation constant, so the system reduces to a linear one with
Lipshitz coefficients when $\delta =0$. Even though the standard
(unperturbed) LQR problems have an explicit quadratic form as a solution,
this perturbed version of the problem cannot be solved explicitly. Using the
same arguments above, the corresponding HJB equation is given by 
\begin{equation}
v_{t}(t,x)+\frac{1}{2}\sigma ^{2}(t)v_{xx}(t,x)+\inf_{u}\{(x-\xi
(t))^{2}+k_{1}(t)u^{2}+v_{x}(t,x)(A(t)x-\delta x^{3}+B(t)u)=0  \label{hjb}
\end{equation}%
with $v(T,x)=k_{2}(x-\xi (T))^{2}$. When the terminal condition is bounded
(e.g. when $k_{2}=0$, as in Tsai, 1978), using the methods of the parabolic
PDEs, it can be shown to have a smooth solution. For more general functions,
even when the PDE is uniformly parabolic, the existence of a classical
solution is not guaranteed in general. To prove that the equation (\ref{pert}%
)\ also has a square integrable solution $X^{u^{\ast }}$ corresponding to
the (feedback) control $u^{\ast }(t)=-\frac{B}{2k}(t)v_{x}(t,X(t))$, we may
need some a priory estimates on the (potentially viscosity) solutions of (%
\ref{Pde}). However, if the solution is smooth, the uniqueness follows from
Corollary \ref{uniq}.
\end{example}

\begin{theorem}
Consider the perturbed state dynamics (\ref{pert})\ together with the cost
function (\ref{cost1}) and the value function $V^{\delta }(s,x)$. Then 
\textit{\newline
}(i) The value function $V(s,x)$ is the unique smooth solution to the HJB PDE%
\begin{eqnarray}
v_{t}+\frac{1}{2}\sigma ^{2}(t)v_{xx}+(x-\xi (t))^{2}-(A(t)x-\delta
x^{3})v_{x}-\frac{1}{2}C(t)v_{x}^{2} &=&0  \label{Pde} \\
v(T,x) &=&0  \notag
\end{eqnarray}%
\ where $C(t)=(B^{2}/2k_{1})(t)$ over $[0,T]$. \textit{\newline
}(ii) Let $H(t)=\frac{C(t)}{\sigma ^{2}(t)}$ satisfy the assumption of
Condition 1 (i). Then the triple $(\tilde{X}%
_{t}^{s,x},Y_{t}^{s,x},Z_{t}^{s,x})$ with $Y_{t}=V(t,\tilde{X}_{t})$ and $%
Z_{t}=\sigma (t)V_{x}(t,\tilde{X}_{t})$ is a (unique) solution to the FBSDE
system 
\begin{eqnarray}
dY_{t}^{s,x} &=&-F(t,\tilde{X}_{t},Z_{t})dt+Z_{t}dW_{t}\text{, }Y_{T}^{s,x}=0
\label{fbsde2} \\
\tilde{X}_{r} &=&x-\int\limits_{s}^{t}(A\tilde{X}_{r}-\delta \tilde{X}%
_{r}^{3})dr+\int\limits_{s}^{t}\sigma (r)dW_{r},  \notag
\end{eqnarray}%
where $F(t,x,z)=(x-\rho (t))^{2}-\frac{H(t)z^{2}}{2}$. Moreover, $%
V(t,x)=Y_{t}^{t,x},$ for $(t,x)\in \lbrack 0,T)\times 
\mathbb{R}
$.
\end{theorem}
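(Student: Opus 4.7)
The plan is to reduce both parts to Theorem \ref{Main} and Corollary \ref{uniq} applied to the driver $F(t,x,y,z)=(x-\xi(t))^{2}-\tfrac{1}{2}H(t)z^{2}$, which fits the template (\ref{driver}) with $f(t,x)=(x-\xi(t))^{2}$, $h\equiv 0$, $\lambda\equiv 0$, and terminal data $g\equiv 0$. Conditions 1(iii)--(v) hold trivially ($\lambda,h$ vanish, $g$ is bounded), item 1(i) is the standing hypothesis on $H$, and item 1(ii) reduces to $\tilde{X}\in L_{F}^{2}$. For the latter I would apply It\^o to $\tilde{X}_{t}^{2p}$: the dissipative cubic term $-\delta x^{2p+2}$ absorbs the linear drift contribution via Young's inequality, yielding $\sup_{0\le t\le T}E[\tilde{X}_{t}^{2p}]<\infty$ for every $p\ge 1$, while pathwise uniqueness follows from the monotonicity of $x\mapsto A(t)x-\delta x^{3}$.

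For part (ii), given $V\in C^{1,2}$ solving (\ref{Pde}), It\^o applied to $V(t,\tilde{X}_{t})$ together with the PDE collapses the drift to $-F(t,\tilde{X}_{t},Z_{t})$, so $(Y,Z)=(V(t,\tilde{X}),\sigma V_{x}(t,\tilde{X}))$ solves the BSDE in (\ref{fbsde2}). To invoke Theorem \ref{Main} I need $Y$ bounded below, which follows from $V\ge 0$: since $g\equiv 0$ and $f\ge 0$, a parabolic comparison argument against the trivial subsolution $\tilde{V}\equiv 0$ of (\ref{Pde}) gives $V\ge 0$, hence $Y\ge 0$. Uniqueness of $(Y,Z)$ in $S_{F_{T}}^{1}\times L_{F}^{2}$ with $Y$ bounded below is then Theorem \ref{Main}.

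Part (i) splits into uniqueness of smooth PDE solutions and identification with $V^{\delta}$. For uniqueness, two smooth solutions $V^{(1)},V^{(2)}$ of (\ref{Pde}) induce two BSDE solutions that coincide by Theorem \ref{Main}, whence $V^{(1)}(t,\tilde{X}_{t})=V^{(2)}(t,\tilde{X}_{t})$ a.s.; since $\sigma$ is bounded away from zero, the law of $\tilde{X}_{t}$ has full topological support on $\mathbb{R}$, forcing $V^{(1)}\equiv V^{(2)}$ on $[0,T]\times\mathbb{R}$. For identification, I would run the classical verification argument: It\^o applied to $V(t,X_{t}^{u})$ for any admissible $u$, together with the pointwise Hamiltonian bound attained at $u^{\ast}(t,x)=-B(t)V_{x}(t,x)/(2k_{1}(t))$, yields $V(s,x)\le J^{u}(s,x)$, with equality along the feedback $u^{\ast}$, so $V=V^{\delta}$.

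The main obstacle is the admissibility of $u^{\ast}$: the closed-loop SDE has drift $A(t)x-\delta x^{3}-B^{2}(t)V_{x}(t,x)/(2k_{1}(t))$, and $V_{x}$ is a priori neither Lipschitz nor of linear growth, so standard existence theorems do not apply. One must combine the dissipative cubic contribution $-\delta x^{3}$ with a priori polynomial bounds on $V_{x}$, obtainable for instance by differentiating (\ref{Pde}) in $x$ and exploiting the favorable $-3\delta x^{2}V_{x}$ term, in order to preclude explosion and secure $X^{u^{\ast}}\in L_{F}^{2}$. This is a PDE-regularity question sitting outside the BSDE uniqueness framework of Section~2, and it is where the argument becomes genuinely delicate.
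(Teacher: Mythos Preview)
Your proposal is correct and follows essentially the same route as the paper: verify that the driver fits the template (\ref{driver}), establish moment bounds for $\tilde{X}$ via the dissipative cubic, and then invoke Theorem~\ref{Main} and Corollary~\ref{uniq} for BSDE/PDE uniqueness together with a verification argument for identification with the value function. The paper's proof is terser on exactly the points you flag as delicate---existence of a smooth PDE solution and admissibility of the closed-loop control $u^{\ast}$---and simply outsources them to Tsai (1978) and Fleming--Soner (2006); your explicit comparison argument for $V\ge 0$ and the full-support step for PDE uniqueness make precise what the paper's appeal to Corollary~\ref{uniq} leaves implicit.
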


\begin{proof}
For part (i), note that the derivative of the function $\mu (x)=Ax-\delta
x^{3}$ is bounded above and $x\mu (x)$ can be bounded from above by $\alpha
-\beta x^{2}$, for some positive constants $\alpha $ and $\beta $. So using
the Lyapunov conditions for locally Lipshitz parameters, the forward SDE in (%
\ref{fbsde2}) can be shown to have a unique global solution $\tilde{X}\in
S_{F}^{p}$, for all $p\geq 1$. By the relevant DPP results for bounded
terminal value problems, if the value function $V(t,x)$ is sufficiently
smooth, then it should satisfy the HJB PDE (\ref{Pde}) together with the
candidate optimal control $u^{\ast }$. Since the time dependent parameters
are (uniformly) continuous on $[0,T]$, by following the similar steps as in
Tsai (1978), one can show the existence of a smooth solution $v(t,x)$ to the
PDE (\ref{Pde}), too. Then by Theorem \ref{Main}, representations (\ref{18}%
)-(\ref{18b}) and the Corollary \ref{uniq}, the BSDE in (\ref{fbsde2}) has a
unique solution $(Y,Z)$ such that $v(t,x)=Y_{t}^{t,x}=V(t,x)$ and $\sigma
(t)V(t,X(t))=Z(t)$. Alternatively, it can be inferred from an applicable
verification theorem, e.g. as in Tsai (1978) or Fleming and Soner (2006).
\end{proof}

\begin{remark}
Such FBSDE representations would be very helpful to solve these type of
nonlinear PDEs (and control problems) numerically, especially in higher
dimensional cases. It is especially useful if the nature of the PDE solution
is not known explicitly and can be inferred from the properties of the
numerical solution to the corresponding FBSDE system. We leave the
discussion of the numerical solutions and their stability/convergence
properties to some subsequent work, including Cetin (2012).
\end{remark}

\renewcommand{\baselinestretch}{1}\small\normalsize%

\end{document}